\documentclass[11pt]{amsart}
\usepackage{pdfsync}
\usepackage[all]{xy}
\usepackage{tikz}
\usepackage{tikz-cd}

\usepackage{amsmath,amssymb,amscd,amsfonts,verbatim}
\usepackage{paralist}
\usepackage[mathscr]{eucal}

\usepackage{enumerate}

\usepackage[pdftex]{hyperref}
\hypersetup{citecolor=blue,linktocpage}

\newtheorem{thm}{Theorem}[section]
\newtheorem{lem}[thm]{Lemma}
\newtheorem{cor}[thm]{Corollary}
\theoremstyle{definition}

\newtheorem{assu-nota}[thm]{Assumption--Notation}
\newtheorem{ex}[thm]{Example}

\newcommand{\ol}{\overline}

\newcommand{\K}{\mathbb K}
\newcommand{\N}{\mathbb N}

\newcommand{\Dc}{\mathcal D}

\newcommand{\Lc}{\mathcal L}

\DeclareMathOperator{\IM}{Im}

\DeclareMathOperator{\Spec}{Spec}

\newcommand{\OO}{\mathcal{O}}

\usepackage[colorinlistoftodos]{todonotes}

\newcommand\rpi[1]{\todo[inline,color=yellow]{#1}}

\newcommand\ritai[1]{\todo[inline,color=pink]{#1}}

\usepackage{todonotes}

\numberwithin{equation}{section}
\title[Parity of  theta characteristics and infinitesimal deformations]{The parity of  theta characteristics is preserved by infinitesimal deformations}
 
 \thanks   {\tiny {Research partially supported by Funda\c c\~ao para a Ci\^encia e Tecnologia (FCT), Portugal, through CAMGSD, IST-ID, projects UID/4459/2025, UIDB/04459/2020 and  UIDP/04459/2020, by the European Union - Next Generation EU, Mission 4 Component 2 - CUP E53D23005400001 and by  PRIN 2022BTA242 ``Geometry of algebraic structures: Moduli, Invariants, Deformation'' of Italian MUR.
Part of this project was carried out while the authors were guests of the Research in Pairs program of CIRM-Trento.
The first named author is a member of Centro de An\'alise Matem\'atica, Geometria e Sistemas Din\^amicos.
 The second and third named authors are  members of GNSAGA of INDAM. \newline
 We are indebted to Barbara Fantechi for explaining us how to use Cohomology and Base Change in this context. 
 }}

\author{ Margarida Mendes Lopes} 
\author{ Rita Pardini} 
\author{Roberto Pignatelli}

\begin{document}
\begin{abstract}  
In this note, given a family of  relative dimension one over a smooth curve, we determine the parity of the restriction of a relative theta characteristic to an arbitrary multiple of a fiber in terms of the parity of the restriction to a general fibre.\\
This result can be regarded as a variant of the well-known theorem on the invariance of the parity of theta characteristics in families.\\
As a corollary, we obtain that the torsion subsheaf of the first higher direct image sheaf of a relative theta characteristic splits as a direct sum of two isomorphic sheaves.\\

\medskip
\noindent{\em 2020 Mathematics Subject Classification:} Primary 14H10; Secondary 14B10, 14D06.

\par
\medskip
\noindent{\em Keywords:} Parity of Theta Characteristic, torsion of higher direct image sheaves, fibered surfaces.
 
 \end{abstract}

\maketitle

 \setcounter{tocdepth}{1}
 
\tableofcontents

\section{Introduction}

We work over an algebraically closed field $\mathbb K$ of characteristic $\neq 2$.\par

In  \cite{Mum71} Mumford has given an algebraic proof, independent of the theory of theta functions, of the classically known fact that the parity of theta-characteristics is constant in families. We recall below  a   generalized version  of this result, due to Harris (see also \cite{Cor89}):
\smallskip

\noindent {\bf Theorem.} (\cite[Theorem 1.10.(i)]{Har}) 
{\em  Let $\Delta$ be an irreducible  variety,  let $\pi \colon X\rightarrow \Delta$ be a proper flat map with fibers $C_t:=\pi^{-1}(t)$ reduced curves,  let   $\mathcal L$ be  a line bundle  on $X$  and set $\mathcal L_t:={\mathcal L}|_{C_t}$.
 
 If  ${\mathcal L}_t^{\otimes 2}
\cong \omega_{C_t}$ for all  $t\in \Delta$,  then the function $t\mapsto h^0(C_t,{\mathcal L}_t)$ is constant modulo 2.}
\bigskip

In this note we  prove an infinitesimal version of the above result:

\begin{thm}\label{thm: infinitesimal}
Let $\Delta$ be  a smooth  connected curve and let $\pi\colon X\to \Delta$ a  projective flat morphism  whose fibers  $C_t:=\pi^{-1}(t)$ are reduced connected curves and let   $\mathcal L$ be  a line bundle  on $X$.

If   ${\mathcal L}_t^{\otimes 2}
\cong \omega_{C_t}$ for all  $t\in \Delta$, then   $h^0(kC_t,{\mathcal L}|_{kC_t})=kh^0(C_t, \mathcal L_t)$ modulo 2 for all $k\in \N_{>0}$ and for all $t\in \Delta$.

Fixed $t\in \Delta$, the even numbers $kh^0(C_t, \mathcal L_t)-h^0(kC_t,{\mathcal L}|_{kC_t})$ form a non-decreasing sequence, indexed by $k$.
\end{thm}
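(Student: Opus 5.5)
The plan is to reduce to a statement in commutative algebra over a discrete valuation ring, and to get there through Mumford's Lagrangian description of $h^0$ of a theta characteristic. Fix $t\in\Delta$. Since every statement concerns $kC_t$, we may replace $\Delta$ by $\Spec R$, where $R=\OO_{\Delta,t}$ (or its completion) is a DVR with uniformizer $s$. Then $kC_t=X\times_\Delta\Spec R/(s^k)$.

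\textbf{Step 1: Lagrangian set-up.} Following \cite{Mum71}, choose a relative effective Cartier divisor $D\subset X$, flat over $\Delta$ and of large relative degree. Put $V=\pi_*(\Lc(D)/\Lc(-D))$, which is a free $R$-module of rank $2n$ after shrinking. It carries a nondegenerate quadratic form given by the sum of residues of the pairing $\Lc(D)\otimes\Lc(D)\to\omega(2D)$, using $\Lc_t^{\otimes2}\cong\omega_{C_t}$ and $\mathrm{char}\neq2$. It contains two Lagrangian direct summands:
\[
W_1=\pi_*(\Lc/\Lc(-D)),\qquad W_2=\text{the image of }\pi_*\Lc(D).
\]
For large $D$, $R^1\pi_*\Lc(D)=0$, so $\pi_*\Lc(D)$ commutes with base change and $W_2$ is a subbundle. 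The same exact-sequence argument then shows that for every $k$
\[
H^0(kC_t,\Lc|_{kC_t})\cong (W_1\cap W_2)\otimes R/(s^k),
\]
where the intersection is taken inside $V\otimes R/(s^k)$. Here we use that the base change $R\to R/(s^k)$ preserves all the relevant sequences.

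\textbf{Step 2: alternating matrix.} Choose a Lagrangian complement $W_2'$ of $W_2$ transverse to $W_1$ at the closed point, so $V=W_2\oplus W_2'\cong W_2\oplus W_2^\vee$. Then $W_1$ is the graph of a map $\varphi\colon W_2'\to W_2$, and $W_1$ being isotropic forces $\varphi$ to be alternating; this is where $\mathrm{char}\neq 2$ enters. The intersection $W_1\cap W_2$ modulo $s^k$ becomes $\ker(\varphi\otimes R/(s^k))$. I expect this step to be the main obstacle: one must check carefully that the transversality can be arranged, and that the identification of Step 1 holds over the Artinian base $R/(s^k)$ and not only over fields.

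\textbf{Step 3: normal form.} An alternating matrix over a PID has a symplectic Smith normal form. There is a basis in which $\varphi$ is block diagonal, with blocks $\begin{pmatrix}0&s^{a_j}\\-s^{a_j}&0\end{pmatrix}$ for $j=1,\dots,p$, where $0\le a_j<\infty$, together with a zero block of size $m$. The kernel of $\mathrm{diag}(s^{b})$ modulo $s^k$ has length $\min(b,k)$, with $\min(\infty,k)=k$. Hence
\[
h^0(kC_t,\Lc|_{kC_t})=km+2\sum_{j}\min(a_j,k),
\qquad
h^0(C_t,\Lc_t)=m+2\#\{j: a_j\ge1\}.
\]
Both claims now follow. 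First,
\[
kh^0(C_t,\Lc_t)-h^0(kC_t,\Lc|_{kC_t})=2\sum_{j:\,a_j\ge1}\bigl(k-\min(a_j,k)\bigr),
\]
which is even. Second, each summand $k-\min(a_j,k)$ is non-decreasing in $k$, so the whole sequence is non-decreasing. As a byproduct, the pairing of invariant factors shows that the torsion of $R^1\pi_*\Lc=\mathrm{coker}\,\varphi$ is a sum of two isomorphic modules.
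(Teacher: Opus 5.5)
There is a genuine gap in Step 2, at the point you flagged: the Lagrangian complement $W_2'$ of $W_2$ that is transverse to $W_1$ need not exist. Your form is symmetric. In a $2n$-dimensional nondegenerate quadratic space the Lagrangians fall into two families, and $U,U'$ lie in the same family iff $\dim(U\cap U')\equiv n \pmod 2$. If some $\ol W'$ were transverse to both $\ol W_1$ and $\ol W_2$, then $\ol W_1$ and $\ol W_2$ would lie in the same family. So such a $\ol W'$ can exist only if $h^0(C_t,\Lc_t)=\dim(\ol W_1\cap\ol W_2)\equiv n\pmod 2$.

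Concretely, take $n=1$: the hyperbolic plane $\K^2$ with quadratic form $xy$, $\ol W_1=\K\times 0$ and $\ol W_2=0\times\K$. These are the only two isotropic lines, so no common transversal exists. Nothing in your choice of $D$ controls this parity. A non-isotropic complement does not rescue the argument, because then isotropy of $W_1$ no longer forces $\varphi$ to be alternating.

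There is also a slip in direction. If $W_2'\cap W_1=0$ at the closed point, then $W_1$ is the graph of a map $\varphi\colon W_2\to W_2'\cong W_2^\vee$. A graph over $W_2'$ would meet $W_2$ trivially and could not compute $W_1\cap W_2$.

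The missing idea is a parity adjustment, after which your argument goes through.
\begin{enumerate}
\item Shift $n$ by one without changing $(W_1\otimes B_k)\cap(W_2\otimes B_k)$. Either add one more section to $D$, or replace $V,W_1,W_2$ by $V\oplus H$, $W_1\oplus\ell_1$, $W_2\oplus\ell_2$, where $H=\ell_1\oplus\ell_2$ is a hyperbolic plane over $R$.
\item Now suppose $q:=\dim(\ol W_1\cap\ol W_2)\equiv n$. Use a basis as in the proof of Lemma \ref{lem: aux}, with $\ol W_1=\langle\bar e_1,\dots,\bar e_n\rangle$ and $\ol W_2=\langle\bar e_1,\dots,\bar e_q,\bar f_{q+1},\dots,\bar f_n\rangle$. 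Since $n-q$ is even, there is an invertible alternating map $\alpha\colon\langle\bar e_{q+1},\dots,\bar e_n\rangle\to\langle\bar f_{q+1},\dots,\bar f_n\rangle$; let $\Gamma_\alpha$ be its graph. Then $\ol W':=\langle\bar f_1,\dots,\bar f_q\rangle\oplus\Gamma_\alpha$ is a common transversal.
\item Lift $\ol W'$ to a Lagrangian complement of $W_2$ over $R$. Lift to $w_i$ a basis of $\ol W'$ dual to a basis $\bar v_j$ of $\ol W_2$, and normalize so that $Q(w_i,v_j)=\delta_{ij}$. Then replace $w_i$ by $w_i-\frac12\sum_jQ(w_i,w_j)v_j$, as in the paper; this does not change the reductions, since $\ol W'$ is isotropic.
\end{enumerate}

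So repaired, your route genuinely differs from the paper's. The paper never needs a common transversal. It splits $V=U_1\oplus U_2$ adapted to $\ol W_1\cap\ol W_2$ and reduces to a $q\times q$ skew matrix divisible by $s$. It then proves that $\dim_\K\IM(M\bmod s^k)$ is even and non-decreasing via the block matrices $N_k$ of Lemma \ref{lem: skew}, with no normal form theory. Your symplectic Smith normal form over the DVR instead gives the explicit formula $h^0(kC_t,\Lc|_{kC_t})=km+2\sum_j\min(a_j,k)$. It also identifies $R^1\pi_*\Lc\cong\operatorname{coker}\varphi$, so Corollary \ref{cor: torsion} follows directly without quoting Harris's theorem. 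Since $m$ is the generic value of $h^0$, the parity invariance over $\Delta$ also follows.
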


The last sentence may be seen as an infinitesimal version of semicontinuity.
Combining the two previous theorems, we obtain:
\begin{cor} \label{cor: torsion}
In the assumptions of Theorem \ref{thm: infinitesimal},      there is a  coherent  sheaf $\mathcal T$ on $\Delta$ such that the torsion subsheaf of $R^1\pi_*\Lc$ is isomorphic to $\mathcal T\oplus \mathcal T$. 
\end{cor}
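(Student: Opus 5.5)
The plan is to reduce the statement to a computation with a finitely generated module over a discrete valuation ring, and then feed Theorem~\ref{thm: infinitesimal} into it. Since $\Delta$ is a smooth curve and $R^1\pi_*\Lc$ is coherent, its torsion subsheaf is a skyscraper sheaf supported on finitely many points. It is therefore the direct sum of its stalks, so it suffices to work at one point $t\in\Delta$. Let $R=\OO_{\Delta,t}$, with uniformizer $s$, and let $M=(R^1\pi_*\Lc)_t$. By the structure theorem over a DVR,
\[
M\cong R^{a}\oplus \bigoplus_{i=1}^{n} R/(s^{m_i}),\qquad m_i\ge 1 .
\]
The goal is to show that, for each $j$, the number of indices $i$ with $m_i=j$ is even. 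Then $M_{\rm tors}\cong N\oplus N$, and taking $\mathcal T$ to be the skyscraper sheaf with stalk $N$ at each such $t$ proves the corollary.

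First, I relate $M$ to the cohomology of the thickened fibres. Since $\pi$ is flat, $kC_t=X\times_\Delta \Spec R/(s^k)$. The fibres have dimension one, so $R^2\pi_*$ vanishes on every base change, and cohomology and base change in top degree (right exactness of $H^1$) gives
\[
M\otimes_R R/(s^k)\cong H^1(kC_t,\Lc|_{kC_t}).
\]
Next, $\chi(kC_t,\Lc|_{kC_t})=0$. Flatness gives the filtration of $\OO_{kC_t}$ by powers of $s$, whose graded pieces are $\OO_{C_t}$; tensoring with $\Lc$, the graded pieces are $\Lc_t$. Hence $\chi(\Lc|_{kC_t})=k\chi(\Lc_t)$. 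Moreover $\chi(\Lc_t)=0$, because Serre duality on the Gorenstein curve $C_t$ gives $h^1(\Lc_t)=h^0(\omega_{C_t}\otimes\Lc_t^{-1})=h^0(\Lc_t)$. Therefore
\[
h^0(kC_t,\Lc|_{kC_t})=\operatorname{length}(M/s^kM)=ka+\sum_i \min(m_i,k).
\]
In particular, the case $k=1$ gives $h^0(C_t,\Lc_t)=a+n$.

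Now I apply Theorem~\ref{thm: infinitesimal}, which gives $ka+\sum_i\min(m_i,k)\equiv k(a+n)\pmod 2$ for all $k\ge1$. Set $c_j=\#\{i: m_i\ge j\}$, so that $c_1=n$ and $\sum_i\min(m_i,k)=\sum_{j=1}^k c_j$. The congruence becomes $\sum_{j=1}^k c_j\equiv kc_1\pmod 2$ for every $k$. Taking differences in $k$ gives $c_k\equiv c_1\pmod 2$ for all $k$. For $k>\max_i m_i$ we have $c_k=0$, so every $c_j$ is even. It follows that $\#\{i: m_i=j\}=c_j-c_{j+1}$ is even for every $j$. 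Hence the cyclic summands of $M_{\rm tors}$ pair up, and $M_{\rm tors}\cong N\oplus N$ with $N=\bigoplus_j (R/(s^j))^{(c_j-c_{j+1})/2}$.

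The main point requiring care is the base change isomorphism $M/s^kM\cong H^1(kC_t,\Lc|_{kC_t})$ over the non-reduced bases $\Spec R/(s^k)$. This needs the top-degree form of cohomology and base change, valid because $\pi$ is projective and flat with one-dimensional fibres. The rest is bookkeeping with the DVR structure theorem.
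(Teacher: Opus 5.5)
Your proof is correct. Its skeleton matches the paper's: localize at a point, decompose the torsion over the DVR, identify $h^0(kC_t,\Lc|_{kC_t})=h^1(kC_t,\Lc|_{kC_t})$ with the length of $M/s^kM$ via top-degree base change, then apply Theorem~\ref{thm: infinitesimal}.

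The one genuine difference is how you show that $c_1$ (the paper's $m_1$) is even. The paper gets this from the Mumford--Harris invariance of parity in families, by comparing $h^0(C,L)=q_0+m_1$ with the generic value $q_0$. It then uses Theorem~\ref{thm: infinitesimal} only to get $m_1+\dots+m_k\equiv k m_1\equiv 0 \pmod 2$. You instead extract $c_k\equiv c_1 \pmod 2$ for all $k$ from Theorem~\ref{thm: infinitesimal} alone. You then use that the torsion has finite length, so $c_k=0$ for $k\gg 0$, which forces $c_1$ to be even.

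What your route buys: the corollary needs only Theorem~\ref{thm: infinitesimal}, not the "combination of the two theorems" the paper invokes. Read backwards ($h^0(C_t,\Lc_t)=a+c_1\equiv a$, the generic value), your argument even recovers the Mumford--Harris theorem for families over a smooth curve from the infinitesimal statement. The paper's route is a line shorter but leans on that external result.

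The other differences are cosmetic. You prove $\chi(\Lc|_{kC_t})=k\chi(\Lc_t)=0$ through the $s$-adic filtration and Serre duality on $C_t$, instead of quoting Riemann--Roch on $kC$. You also cite top-degree cohomology and base change directly rather than Corollary~\ref{cor: base change}.
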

The proofs are given in the next section.
\bigskip

Our interest in this question arose in studying surfaces of general type with canonical map of odd degree (cf. \cite{odd-degree}). The fact that the parity of theta characteristics is constant in families is crucial throughout our analysis of such surfaces but it does not suffice to deal with some of the possible cases, that we finally managed to rule out by means of Corollary \ref{cor: torsion}.

We think however that Theorem \ref{thm: infinitesimal} and Corollary \ref{cor: torsion} are of independent interest. 
Actually, as suggested by J. Koll\'ar, whom we thank warmly, it is natural to ask whether Theorem \ref{thm: infinitesimal} can be generalized to the case where $B$ is any local artinian $\K$-algebra.  Namely, given a flat family of curves $\mathcal X\to \Spec B$ with closed fiber $X_0$ and a relative theta characteristic $\mathcal L$ on $\mathcal X$, is it true that the number  $\dim_{\K}B \cdot h^0(\mathcal L|_{X_0})-h^0(\mathcal X, \mathcal L)$ is even?
 
 We do not know the answer to this question; we just observe that our method of proof does not extend to the general situation (cf.  Example \ref{ex: kollar}).

\section{Proofs}

\subsection{Preliminary results}
The proof of Theorem \ref{thm: infinitesimal} uses some  auxiliary results  that we now explain.

Let $\Delta$ be a smooth connected curve, fix $\bar t\in \Delta$,  set $A:=\OO_{\Delta, \bar t}$,  $B_k:=A/s^kA$,  where $s\in A$ a local parameter.  \begin{lem}\label{lem: skew}
 Let $q>0$ be an integer and let $\psi\colon A^q\to A^q$ be an $A$-linear map given by a skew-symmetric matrix $M$; for $k\in \N_{>0}$  let $\psi_k\colon B_k^q\to B_k^q$ be the map induced by $M$ and let $r_k$ be the dimension of $\IM \psi_k$ as a $\K$-vector space.
 
 Then  $\{r_k\}$ is a non-decreasing sequence of even integers.
 \end{lem}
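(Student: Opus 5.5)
We plan to reduce $M$ to a normal form over the discrete valuation ring $A$ and then compute $r_k$ explicitly. First note that $A=\OO_{\Delta,\bar t}$ is a DVR with uniformizer $s$ and residue field $\K$. Every element of $B_k=A/s^kA$ can be written as a unit times $s^j$, with $0\le j\le k$.

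\textbf{Step 1: normal form.} We use the standard symplectic analogue of Smith normal form. Over a principal ideal domain, and in particular over a DVR, a skew-symmetric matrix is congruent to a block diagonal matrix. That is, there is $P\in GL_q(A)$ with
\[
P^{T}MP=\mathrm{diag}\Bigl(s^{a_1}J,\dots,s^{a_m}J,0,\dots,0\Bigr), \qquad J=\begin{pmatrix}0&1\\-1&0\end{pmatrix},
\]
where the exponents satisfy $a_i\ge 0$, and a block $s^{a}J$ with $a=\infty$ is understood as zero.

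The argument is by induction on $q$. Choose an entry $m_{ij}$ of minimal valuation; it cannot lie on the diagonal, since the diagonal of $M$ is zero because $\mathrm{char}\,\K\neq 2$. Permute so that this entry sits in position $(1,2)$, and write $m_{12}=us^{a}$ with $u$ a unit. Every other entry of the first two rows and columns is then divisible by $s^a$. Hence symmetric row-and-column operations clear these entries and split off the block $us^aJ$. Rescaling the first basis vector by $u^{-1}$ turns this block into $s^aJ$.

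\textbf{Step 2: reduction to blocks.} Since $P$ and $P^T$ are invertible over $A$, they remain invertible over $B_k$. Therefore $\IM\psi_k$ has the same $\K$-dimension as the image of the reduced normal form. That image is the direct sum of the images of the blocks.

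\textbf{Step 3: the image of one block.} The image of the map $B_k^2\to B_k^2$ given by $s^aJ$ is $(s^aB_k)^2$. Its $\K$-dimension is
\[
2\dim_\K s^aB_k = 2\max(k-a,0),
\]
since $s^aB_k\cong A/s^{k-a}A$ when $a<k$, and it is zero otherwise.

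Summing over the blocks gives
\[
r_k=2\sum_{i=1}^m \max(k-a_i,0).
\]
This is even, and it is non-decreasing in $k$ because each summand is non-decreasing.

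\textbf{Main obstacle.} The only nontrivial point is the existence of the skew normal form in Step 1. There one must check two things: that the minimal-valuation entry can be taken off the diagonal, which uses that the diagonal of $M$ is zero, and that the clearing operations preserve skew-symmetry. Both follow because we only apply congruence transformations $M\mapsto P^TMP$.

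Alternatively, Step 1 could be avoided by proving directly that the parity is even: $\IM\psi_k$ is the image of an alternating form on $B_k^q$, and one can argue via a filtration by powers of $s$. Still, the normal form is the cleanest route.
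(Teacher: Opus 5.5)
Your proof is correct, but it takes a different route from the paper's.

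\textbf{Your route.} You bring $M$ to the normal form $\mathrm{diag}(s^{a_i}J,0)$ by a congruence over the DVR $A$, and then read off the closed formula $r_k=2\sum_i\max(k-a_i,0)$.

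\textbf{The paper's route.} The paper never leaves linear algebra over $\K$. It expands $M\equiv\sum_{j<k}s^jM_j \pmod{s^k}$, where each $M_j$ is a skew-symmetric matrix with entries in $\K$. It then writes $\psi_k$ in the $\K$-basis $\{s^jc_i\}$, ordered by increasing powers of $s$ in the domain and by decreasing powers of $s$ in the codomain. In these bases the matrix of $\psi_k$ is the block Hankel matrix $N_k$. Its $(I,J)$ block is $M_{k+1-I-J}$ when $I+J\le k+1$, and $0$ otherwise. Since $N_k$ is skew-symmetric over $\K$, its rank $r_k$ is even. Since $N_k$ is a submatrix of $N_{k+1}$, the sequence of ranks is non-decreasing.

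\textbf{What your approach buys.} It gives more information: $r_k$ is expressed through the elementary divisors $s^{a_i}$. It also shows that the increments $r_{k+1}-r_k=2\#\{i: a_i\le k\}$ are themselves non-decreasing. The price is the structure theorem for alternating matrices, which is specific to PIDs. Your minimal-valuation induction proves that theorem adequately over a DVR.

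\textbf{What the paper's approach buys.} It is more elementary: it uses only the fact that a skew-symmetric matrix over a field has even rank. The reversed ordering of the codomain basis amounts to pairing via the coefficient of $s^{k-1}$, i.e.\ via the perfect pairing on $B_k\cong\K[s]/(s^k)$. This identifies $\dim_\K\IM\psi_k$ with the rank of a skew-symmetric $\K$-bilinear form. That is exactly the structure that is missing for $B=\K[x,y]/(x,y)^2$ in Example~\ref{ex: kollar}.
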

 \begin{proof}
 
 Writing  $M$ as 
  \[
  M=\sum_{j=0}^{k-1} s^j M_j\quad \mod s^k
  \]
  we get skew-symmetric matrices $M_j$ with  coefficients in $\K$.
Denote by $r_k$ the dimension as a $\K$-vector space of the image of $\psi_k$.
Let  $c_1,\ldots,c_q$ be the standard basis of $B_{k}^q$ as a $B_{k}$-module. Then
\begin{equation}\label{eqn:basis of the domain}
c_1,\ldots,c_q, sc_1, \ldots, sc_q, s^2c_1,\ldots, s^{k-1}c_q
\end{equation}
is a basis of $B_{k}^q$ as a $\K$-vector space. We now use this basis to associate a matrix with the operator $\psi_{k}$, but we order it differently when using it as a basis for the domain or as a basis for the codomain. Precisely we order it as in \eqref{eqn:basis of the domain} as a basis of the domain and as 
\begin{equation*}
s^{k-1}c_1,\ldots,s^{k-1}c_q, s^{k-2}c_1, \ldots, s^{k-2}c_q, s^{k-3}c_1,\ldots, c_q
\end{equation*}
as a basis of the codomain. Then a straightforward computation shows that the matrix associated to  $\psi_{k}$ with respect to this choice of the bases is the block matrix
\[ N_k:=
\begin{pmatrix}
M_{k-1} & M_{k-2} & M_{k-3} & \cdots &M_0 \\
M_{k-2} & M_{k-3} & M_{k-4} & \cdots & 0  \\
M_{k-3} & M_{k-4} & M_{k-5} & \cdots & 0 \\
\vdots &  &   & \ddots &  \vdots\\
M_0&0 & 0 &  \cdots & 0 \\
\end{pmatrix}
\]
As all the matrices $M_j$ are skewsymmetric, $N_k$ is skewsymmetric as well, and therefore $r_k$, which equals the rank of $N_k$, is even.

Finally, we note that $N_{k}$ is a submatrix of $N_{k+1}$, and therefore the sequence of even numbers $\{r_k\}_{k \in {\mathbb N}}$  is non-decreasing.  
 \end{proof}
 
Lemma \ref{lem: skew} is  key for the proof of Theorem \ref{thm: infinitesimal}. The following example shows that it does not hold in general for a local  $\K$-algebra $A$ and an  artinian quotient $B=A/I$.
 \begin{ex}\label{ex: kollar}
 Set $A=\K[x,y]$, $I=(x,y)^2$ and $B:=A/I$. Consider the skewsymmetric matrix
 \[ M:=
\begin{pmatrix}
0 & 0 & x \\
0 & 0 & y \\
-x & -y & 0 \\
\end{pmatrix}
\]
and let $\ol\psi \colon B^3\to B^3$ be the map induced by $M$. The image of $\ol \psi$ is spanned as a $\K$-vector space by the independent vectors:
\[
^t(0,0,x),\ \  ^t(0,0,y),\ \  ^t(x,y,0)
\] 
  and therefore it has odd dimension.
   \end{ex}
   \bigskip

  Let $V$ be a free $A$-module of rank $2r$ with a symmetric bilinear  form $Q\colon V\times V\to A$ whose reduction modulo $s$, $\ol Q\colon \ol V\times \ol V\to\K$,   is non degenerate. Assume that $W_1,W_2\subset V$ are free rank $r$ submodules such that $V/W_i$ is free for $i=1,2$. In particular, the map $W_i\otimes_A\!B_k\to V\otimes _A\!B_k$ is injective for $i=1,2$ for all $k$. 
  
  Set now, for all $k \ge 1$ 
  \[
    q_k:=  \dim_{\K}\left( \left(W_1\otimes_A\!B_k\right)\cap \left(W_2\!\otimes_A\!B_{k}\right) \right>)
\]

  Then we get the following infinitesimal version of the permanence of the parity of the dimension of the intersection of two maximal isotropic subspaces:   
  \begin{lem}\label{lem: aux} In the above set-up, if $W_1$ and $W_2$ are totally isotropic for $Q$, then the sequence $\{kq_1-q_k\}_{k \in {\mathbb N}}$ is a non-decreasing sequence of even numbers.  
  \end{lem}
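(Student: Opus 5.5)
We will reduce Lemma \ref{lem: aux} to Lemma \ref{lem: skew} by expressing $q_k$ as $\dim_\K$ of the kernel of a map given by a skew-symmetric matrix over $A$.

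\emph{Normalizing $W_1$.} Since $\ol Q$ is non-degenerate and $\ol W_1:=W_1\otimes_A\K$ is a Lagrangian (totally isotropic, of dimension $r$), there is a Lagrangian complement $\ol W'$ of $\ol W_1$ in $\ol V$. We lift it to a free rank $r$ direct summand $W'$ of $V$. Then $V=W_1\oplus W'$ by Nakayama, because $V/W_1$ is free. The pairing $Q\colon W_1\times W'\to A$ is perfect, since its reduction mod $s$ is perfect. Using this pairing we identify $W'\cong W_1^\vee$.

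\emph{Writing $W_2$ as a graph.} Since $W_2$ is a free rank $r$ direct summand, we cannot assume $W_2\cap W_1=0$ modulo $s$, so a plain graph description is unavailable. Instead we use the standard trick. Choose a basis $e_1,\dots,e_r$ of $W_1$ and the dual basis $f_1,\dots,f_r$ of $W'$. After a symplectic-type change we may swap some $e_i\leftrightarrow f_i$: this is an orthogonal transformation for the split form only up to sign, and since we are in the symmetric case with $\operatorname{char}\neq 2$ it will be handled carefully. The goal is a decomposition $V=U\oplus U'$ into Lagrangian summands such that both $W_1$ and $W_2$ are transverse to $U'$ mod $s$. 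Then $W_i=\{u+\phi_i(u)\}$ with $\phi_i\colon U\to U'\cong U^\vee$, and isotropy of $W_i$ (symmetric $Q$) means $\phi_i$ is skew-symmetric.

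\emph{Computing the intersection.} Over $B_k$ we have $(W_1\otimes B_k)\cap(W_2\otimes B_k)\cong \ker\big((\phi_1-\phi_2)\otimes B_k\big)$. The matrix $M$ of $\phi_1-\phi_2$ is skew-symmetric over $A$. Hence
\[
q_k=\dim_\K B_k^r-r_k=kr-r_k,
\]
with $r_k$ as in Lemma \ref{lem: skew}. Consequently $kq_1-q_k=kr-kr_1-kr+r_k=r_k-kr_1$.

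\emph{The remaining subtlety.} The quantity $r_k-kr_1$ is not directly the monotone even sequence $r_k$. We fix this by first splitting off the nondegenerate part of $M_0$. Choose bases so that $M_0=\begin{pmatrix}J&0\\0&0\end{pmatrix}$ with $J$ invertible skew of size $r_1$. Over $A$, $M$ is then congruent ($M\mapsto PMP^t$ with $P$ invertible, which preserves all $r_k$) to $J'\oplus sM''$ with $J'$ invertible and $M''$ skew of size $r-r_1$. This gives
\[
r_k=kr_1+\operatorname{rank}_\K(sM''\otimes B_k)=kr_1+r''_{k-1},
\]
where $r''_j$ is the $r_j$ of $M''$ (with $r''_0=0$). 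Therefore $kq_1-q_k=r''_{k-1}$, which is even and non-decreasing by Lemma \ref{lem: skew}.

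The main obstacle is the normalization step: producing a common Lagrangian complement $U'$ transverse to both $\ol W_1$ and $\ol W_2$ (a generic Lagrangian works over an infinite field $\K$), lifting it to a direct summand, and checking that graphs of maps $U\to U^\vee$ are isotropic exactly when the maps are skew.
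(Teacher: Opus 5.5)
\textbf{Verdict: genuine gap in the normalization step.}

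Your endgame is correct. Once $W_1,W_2$ are graphs of $\phi_1,\phi_2\colon U\to U'\cong U^\vee$ with $U'$ totally isotropic over $A$, the difference $\phi_1-\phi_2$ is skew and $q_k=kr-r_k$. The congruence $M\sim J'\oplus sM''$ then gives $kq_1-q_k=r''_{k-1}$.

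The problem is that the normalization step is not merely unproved but false in general. A Lagrangian $\ol U'\subset\ol V$ transverse to both $\ol W_1$ and $\ol W_2$ exists only if $q_1\equiv r\pmod 2$. The Lagrangians of $\ol V$ come in two families, and two of them lie in the same family iff their intersection has dimension $\equiv r\pmod 2$. An $\ol U'$ meeting both $\ol W_i$ trivially would put $\ol W_1$ and $\ol W_2$ in the same family, forcing $q_1=\dim(\ol W_1\cap\ol W_2)\equiv r$. The hypotheses of the lemma allow the opposite parity.

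For $r=1$ you can see this by hand. In $\ol V=\K\bar e\oplus\K\bar f$ with $\ol Q(\bar e,\bar e)=\ol Q(\bar f,\bar f)=0$ and $\ol Q(\bar e,\bar f)=1$, the only isotropic lines are $\K\bar e$ and $\K\bar f$. So for $\ol W_1=\K\bar e$, $\ol W_2=\K\bar f$, nothing is transverse to both, generic or not. Also, swapping $e_i\leftrightarrow f_i$ is an honest isometry of the split symmetric form, with no sign issue. It is exactly the move that switches families, so it cannot produce the common complement you want.

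You also do not show that $\ol U'$ lifts to a summand that is totally isotropic over $A$, not just modulo $s$. You need this: otherwise the isotropy condition for a graph acquires the term $Q(\phi(u_1),\phi(u_2))$, and $\phi_1-\phi_2$ is no longer skew.

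\textbf{How to repair it.} Both points can be fixed.
\begin{itemize}
\item Stabilize. Replace $V$ by $V\oplus(Ae_0\oplus Af_0)$, with $e_0,f_0$ isotropic and $Q(e_0,f_0)=1$. Replace $W_1,W_2$ by $W_1\oplus Ae_0$ and $W_2\oplus Af_0$. Every $q_k$ is unchanged and $r$ becomes $r+1$, so you may assume $q_1\equiv r\pmod 2$. Then the Lagrangians transverse to $\ol W_1$ and those transverse to $\ol W_2$ form nonempty open subsets of the same irreducible family, and your genericity argument goes through.
\item For the lift, take $U=W_1$. Complete a basis $e_i$ of $W_1$ to $e_i,f_i$ with $Q(f_i,f_j)=0$ and $Q(e_i,f_j)=\delta_{ij}$, as the paper does. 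Write $\ol U'$ as the graph of a skew map $\ol F\to\ol W_1$, where $F$ is the span of $f_1,\dots,f_r$, and lift that skew matrix to $A$. This gives an $A$-isotropic $U'$ with $V=W_1\oplus U'$ and $\phi_1=0$.
\end{itemize}

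The paper avoids the family issue altogether. It adapts the hyperbolic basis to $\ol W_1\cap\ol W_2$ and writes only $W_2$ as the graph of a map $U_1\to U_2$ (with $\ol U_1=\ol W_2$). It then computes $\dim_\K(W_1\otimes_A B_k+W_2\otimes_A B_k)$ directly. This isolates the $q_1\times q_1$ skew block $(s\mu_{ij})$ and gives $kq_1-q_k=r_k$ with no condition on the parity of $r-q_1$.
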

  \begin{proof}
  Given an $A$-module $N$ we write $\ol N:=N/sN$ and  for $z\in N$ we denote by $\bar z\in \ol N$ its image. 
  \smallskip
  
   We start by showing that any basis $e_1,\dots e_r$ of $W_1$ as $A$-module can be completed to a basis $e_1,\dots e_r, f_1,\dots f_r$ of $V$ such that $Q(f_i,f_j)=0$ and $Q(f_i,e_j)=\delta_{ij}$ for all $1\le i,j\le r$. 
   
   The natural map $V\to W_1^{\vee}$ induced by $Q$ is surjective,  since $W_1$ is a direct summand of $V$ and $\ol Q$ is non  degenerate. So we may find $w_1, \dots w_r\in V$ such that $Q(w_i,e_j)=\delta_{ij}$ for all $1\le i,j\le r$ and set $f_i:=w_i-\frac 12\sum_{j=1}^rQ(w_i, w_j)e_j$.
   
   We set for sake of simplicity $q:=q_1$. By definition of  $q_k$, $\dim_{\mathbb K}  \left( \ol W_1\cap \ol W_2 \right) =q$.
   We choose a basis  $e_1,\dots e_r$ of $W_1$ such that $\bar e_1,\dots \bar e_q$ is a basis of $\ol W_1\cap \ol W_2$ and complete it to a basis $e_1,\dots e_r, f_1,\dots f_r$  of $V$ as above.

   One can pick a basis $v_1,\dots v_r$ of $W_2$ such that $\bar v_i=\bar e_i$ for $i=1, \dots q$ and such that $\bar v_i=\bar f_i+ \sum_{j=q+1}^r a_{ij}\bar e_j$ for $i=q+1,\dots r$ and some scalars $a_{ij}\in \K$. Since $\ol W_2$ is a totally isotropic subspace, the matrix $(a_{ij})$ is skewsymmetric. So, replacing $f_i$ by $f_i+ \sum_{j=q+1}^r a_{ij} e_j$ for $i=q+1,\dots r$, we may assume in addition that $\ol v_i=\bar f_i$ for $i=q+1,\dots r$.

   Denote by $U_1\subset V$ the span of $e_1,\dots e_q, f_{q+1},\dots f_r$ and by $U_2$ the span of $f_1,\dots f_q, e_{q+1},\dots e_r$, so that $V=U_1\oplus U_2$ is a decomposition as the sum of totally isotropic subspaces.
    As $\ol W_2= \ol U_1$, by Nakayama's Lemma  the projection $V\to U_1$ with kernel $U_2$  restricts to a surjective map, hence an isomorphism,  $W_2\to U_1$.
      So   we may write 
\begin{align*}
  v_i= e_i+sz_i& \text{ for }i=1, \dots , q&
  v_i= f_i+sz_i& \text{ for }i=q+1,\dots , r&
\end{align*}
where   
\begin{equation}\label{eqn: zi}
z_i=\sum_{j=q+1}^r\lambda_{ij}e_j+\sum_{j=1}^{q}\mu_{ij}f_j
\end{equation}

Then $\left(W_1\otimes_A\!B_k\right)+ \left(W_2\!\otimes_A\!B_k\right)$ is generated, as a  $\K$-vector space, by the classes modulo $s^k$ of:
  \begin{gather}\label{eq: base}
  e_1, \dots e_r,\dots , s^{k-1}e_1, \dots s^{k-1}e_r,\\
  v_{q+1}, \dots v_r,\dots , s^{k-1}v_{q+1}, \dots s^{k-1}v_r\nonumber\\
      s\sum_{j=1}^q\mu_{1j}f_j , \dots,s\sum_{j=1}^q\mu_{qj}f_j, \dots, s^{k-1}\sum_{j=1}^q\mu_{1j}f_j , \dots,s^{k-1}\sum_{j=1}^q\mu_{qj}f_j.\nonumber
  \end{gather}
It is easy to see that for every vanishing linear combination with  coefficients in $\K$ of the classes in \eqref{eq: base} the coefficients of the classes in the first two rows are trivial, and therefore
\[
\dim_{\mathbb C} \left(\left(W_1\otimes_A\!B_k\right)+ \left(W_2\!\otimes_A\!B_k\right) \right) =
k(2r-q)+r_k
\]
where $r_k$ is the dimension of the complex vector subspace of $V\otimes_A B_k$ 
generated by the classes in the last row of \eqref{eq: base}.

By the Grassman formula 
\[
q_k=\dim_{\mathbb C} \left(\left(W_1\otimes_A\!B_k\right) \cap  \left(W_2\!\otimes_A\!B_k\right) \right) =
kq-r_k.
\]
Consider the $q \times q$ matrix $M=\left(s \mu_{ij} \right)_{i=1,\ldots,q}^{j=1,\ldots,q}$ with entries in $A$.  Since $W_2$ is totally isotropic,
  for $1\le i,j\le q$ one has $0=Q(v_i, v_j)=s(\mu_{ij}+\mu_{ji})+s^2Q(z_i,z_j)$. By \eqref{eqn: zi}  $Q(z_i,z_j)=0$ and therefore $\mu_{ij}+\mu_{ji}=0$. So $M$ is skewsymmetric and $r_k$ is a non-decreasing sequence of even numbers by Lemma \ref{lem: skew}. 
 \end{proof}
  We recall also the following well known fact:
  
  \begin{lem} \label{lem: base change} Let $\Delta:=\Spec R$ be a smooth affine curve, let $\pi\colon X\to \Delta$ be a proper morphism with 
   1-dimensional fibers and let $F$ be a  coherent sheaf on $X$ flat over $\Delta$. Then there exists a two term complex $M^{\bullet}: 0\to M^0\to M^1\to 0$ of finitely generated locally free modules and an isomorphism of functors 
  $$H^p(X\times_{\Delta}\Spec B, F\otimes _RB)\cong H^p(M^{\bullet}\otimes _RB),\quad p\ge 0$$
  on the category of $R$-algebras $B$.
  \end{lem}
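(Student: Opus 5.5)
This is the standard "Grothendieck complex" statement from cohomology and base change (Mumford, Abelian Varieties, \S5; Hartshorne III.12.2), and the plan is to reproduce that construction. The inputs are that $R$ is a Dedekind domain and that the fibres of $\pi$ are one-dimensional.

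\emph{Step 1: a finite complex computing cohomology universally.} Since $\pi$ is proper, I would cover $X$ by finitely many affine opens $U_0,\dots,U_n$; separatedness makes all intersections affine. Let $C^\bullet$ be the alternating Čech complex of $F$ for this cover. It is a bounded complex of $R$-modules, flat because $F$ is flat over $\Delta$. For any $R$-algebra $B$, the base-changed cover is an affine cover of $X\times_\Delta\Spec B$. Pullback commutes with sections over affines, so $C^\bullet\otimes_R B$ is the Čech complex of $F\otimes_R B$. This gives
$$H^p(X_B,F_B)\cong H^p(C^\bullet\otimes_RB),$$
functorially in $B$.

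\emph{Step 2: replace $C^\bullet$ by finitely generated projectives.} The cohomology $H^p(C^\bullet)=H^p(X,F)$ is finitely generated over the noetherian ring $R$, since $\pi$ is proper. The standard descending induction (Mumford's lemma) then produces a bounded complex $K^\bullet$ of finitely generated free $R$-modules and a quasi-isomorphism $K^\bullet\to C^\bullet$. Both complexes are bounded and consist of flat modules. Therefore the cone is a bounded acyclic complex of flat modules, and it stays acyclic after tensoring with any $B$. So $K^\bullet\otimes B\to C^\bullet\otimes B$ is a quasi-isomorphism for every $B$, which gives the isomorphism of functors with $K^\bullet$ in place of $C^\bullet$.

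\emph{Step 3: shrink $K^\bullet$ to amplitude $[0,1]$.} This is the step needing care.

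\begin{itemize}
\item \textbf{Upper end.} The fibres have dimension $1$, so $H^p(X_B,F_B)=0$ for $p\ge2$ and every $B$. In particular this holds for $B=k(y)$ at every point $y$. Suppose $K^\bullet$ lives in degrees $[0,m]$ with $m\ge2$. Then $d\colon K^{m-1}\to K^m$ is surjective after tensoring with every residue field, hence surjective by Nakayama. Its kernel is a projective direct summand, finitely generated and locally free. Replace $K^{m-1}$ by this kernel and drop $K^m$; this does not change $K^\bullet\otimes B$ up to quasi-isomorphism. Iterating brings the top degree down to $1$.
\item \textbf{Lower end.} By Step 1 the Čech complex has no terms in negative degree. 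To truncate on the left, set $M^0:=\ker(K^0\to K^1)$ when $K^\bullet$ has terms in degrees $<0$. Because negative cohomology vanishes universally, the same Nakayama argument, run from the bottom upward, makes each negative-degree differential split-injective with projective cokernel. So the negative part can be split off, and $M^0$ becomes the cokernel of $K^{-1}\to K^0$.
\end{itemize}

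Alternatively, I could construct $K^\bullet$ in degrees $[0,n]$ from the start, as Mumford does, so that only the upper reduction is needed.

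Over the Dedekind ring $R$, projective and finitely generated is the same as locally free of finite rank. This yields the required two-term complex $0\to M^0\to M^1\to0$.

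I expect the main obstacle to be the universality check in Step 3. One must make sure that each truncation preserves $H^p(-\otimes_R B)$ for \emph{all} $R$-algebras $B$, not only for fibres. The remedy is to use split (projective) kernels and cokernels throughout, so that tensoring commutes with every truncation.
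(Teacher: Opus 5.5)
Your proof is correct. Its overall shape matches the paper's: take Mumford's complex $K^\bullet$ of finitely generated projectives in degrees $[0,n]$ and cut it down to degrees $[0,1]$. Your iterated top truncation in fact ends with exactly the paper's complex, $M^0=K^0$ and $M^1=\ker(K^1\to K^2)$.

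Where you differ is in how the truncation is justified.

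\emph{The paper} simply cites Mumford for $K^\bullet$. It gets local freeness of $M^1$ from $R$ being Dedekind, since a submodule of a locally free module is torsion free, hence locally free. It notes that $M^\bullet\to K^\bullet$ is a quasi-isomorphism over $R$ because $H^p(X,F)=0$ for $p\ge 2$. It then invokes Mumford's Lemma 2: a quasi-isomorphism of bounded complexes of flat modules stays one after $\otimes_R B$.

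\emph{You} instead use vanishing of $H^{\ge 2}$ on the closed fibres together with Nakayama. This shows the top differentials are surjective, hence split, so $K^\bullet$ decomposes as $M^\bullet$ plus a contractible complex. Universality in $B$ then becomes automatic, and the argument works over any noetherian base rather than just a Dedekind one. The cost is length compared with the paper's two-line citation argument.

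There are two small slips, both moot if you take Mumford's $[0,n]$ complex as you yourself suggest:
\begin{itemize}
\item Mumford's descending induction does not give a bounded complex of \emph{free} modules. His $K^0$ is only flat, i.e.\ projective. Over a Dedekind ring with nontrivial class group a bounded free representative need not exist; take $X=\Delta\times\mathbb P^1$ with $F$ pulled back from a non-principal ideal, where the class in $K_0(R)$ obstructs it.
\item In your ``lower end'' bullet, $M^0$ should be $\mathrm{coker}(K^{-1}\to K^0)$, not $\ker(K^0\to K^1)$. Also, the bottom-up Nakayama argument needs a complex that is bounded below.
\end{itemize}
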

  \begin{proof}
  By the Theorem in \S 5 of Chapter II of \cite{mumford-abelian} there is a finite complex $K^{\bullet}: 0\to K^0\to\dots \to K^n\to 0$ of finitely generated projective $R$-modules and an isomorphism of functors 
  $$H^p(X\times_{\Delta}\Spec B, F\otimes _RB)\cong H^p(K^{\bullet}\otimes _RB),\quad p\ge 0$$ on the category of $R$-algebras.
Since $\Delta$ is a smooth curve the modules $K^{\bullet}$ and all their submodules are locally free. Set $M^0:=K^0$ and $M^1:= \ker(K^1\to K^2)$. The complex  $M^{\bullet}:=0\to M^0\to M^1\to 0$ has a natural map to $K^{\bullet} $ which is  a quasi-isomorphism, since the fibers of $\pi$ are $1$-dimensional. We conclude by applying  Lemma 2 in \S 5 of Chapter II of \cite{mumford-abelian} to the map of complexes of flat modules $M^{\bullet}\to K^{\bullet}$.
  \end{proof}
  \begin{cor}\label{cor: base change} In the assumptions of Lemma \ref{lem: base change}, if $B$ is an $R$-algebra then:
  \begin{enumerate}
  \item $H^1( X\times _{\Delta}B, F\otimes _RB)\cong H^1(X,F)\otimes _RB$
  \item if $H^1(X,F)=0$, then $H^0( X\times _{\Delta}B, F\otimes _RB)\cong H^0(X,F)\otimes _RB$
  \end{enumerate}
  \end{cor}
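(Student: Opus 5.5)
Both statements follow from Lemma \ref{lem: base change} together with right exactness of the tensor product.

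By Lemma \ref{lem: base change} there is a two term complex $M^\bullet\colon 0\to M^0\xrightarrow{d} M^1\to 0$ of finitely generated locally free $R$-modules computing the cohomology of $F\otimes_R B$ functorially in the $R$-algebra $B$. The complex has no term in degree $2$, so $H^1(M^\bullet\otimes_R B)=\operatorname{coker}(d\otimes 1_B)$. Since $-\otimes_R B$ is right exact,
\[
\operatorname{coker}(d\otimes 1_B)\cong \operatorname{coker}(d)\otimes_R B .
\]
Taking $B=R$ first, we get $H^1(X,F)\cong H^1(M^\bullet)=\operatorname{coker}(d)$. Combining this with the functorial isomorphism for a general $B$ gives
\[
H^1(X\times_\Delta B, F\otimes_R B)\cong H^1(X,F)\otimes_R B,
\]
which proves (1).

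For (2), assume $H^1(X,F)=0$. Then $d\colon M^0\to M^1$ is surjective. Since $M^1$ is locally free, hence projective, the sequence
\[
0\to H^0(X,F)=\ker d\to M^0\to M^1\to 0
\]
splits. A split exact sequence stays exact after tensoring with $B$. Hence $\ker(d\otimes 1_B)\cong \ker(d)\otimes_R B$, and applying the isomorphism of Lemma \ref{lem: base change} in degree $0$ gives
\[
H^0(X\times_\Delta B, F\otimes_R B)\cong H^0(X,F)\otimes_R B.
\]

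Both steps are elementary. The only point needing care is (2): one must use that $M^1$ is projective (or flat) to know that the kernel commutes with base change, since kernels are not right exact in general. The splitting handles this.
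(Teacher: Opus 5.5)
Your proof is correct and follows the paper's argument. Part (1) comes from right exactness of $\otimes_R B$ applied to the cokernel of $M^0\to M^1$, and part (2) from the short exact sequence $0\to H^0(X,F)\to M^0\to M^1\to 0$ staying exact after tensoring with $B$. The only difference is cosmetic: you justify that exactness by splitting, since $M^1$ is finitely generated and locally free over a Noetherian ring and hence projective, whereas the paper uses flatness of $M^1$ (Tor-vanishing); both are valid.
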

 \begin{proof}
Let $M^{\bullet}:=0\to M^0\to M^1\to 0$ be the complex given by Lemma \ref{lem: base change}. There is an exact sequence
\begin{equation}\label{eq: 4terms}
0\to H^0(X, F)\to M^0\to M^1\to H^1(X,F)\to 0
\end{equation}
and $H^i( X\times _{\Delta}B, F\otimes _RB)$ is the $i$-th cohomology of $M^{\bullet}\otimes _RB$. So (i) is a consequence of the fact that tensor product is right exact. 

If $H^1(X,F)=0$, then \eqref{eq: 4terms} gives a short exact sequence 
\begin{equation}\label{eq: 4->3terms}
0\to H^0(X, F)\to M^0\to M^1\to 0
\end{equation}
Since $M^1$ is locally free, and flatness is a local property (see \cite{TSP}  \href{https://stacks.math.columbia.edu/tag/00HT}{Lemma 00HT}) then $M^1$ is flat and therefore, by \href{https://stacks.math.columbia.edu/tag/00HL}{Lemma 00HL} of \cite{TSP},  \eqref{eq: 4->3terms} stays exact after tensoring with $B$. So $H^0(X,F)\otimes_R B \cong H^0(M^{\bullet}\otimes _RB) $, proving (ii).
 \end{proof}
  
\subsection{Proof of Thm. \ref{thm: infinitesimal}}
  
The proof follows the same lines as the proofs of \cite[\S 1]{Mum71}  and \cite[Theorem 1.10.(i)]{Har}.

The statement is local, hence we work near a fixed point $\bar t\in \Delta$ and we  denote by $C$ the fiber $C_{\bar t}$ and by $L$ the line bundle $\Lc_{\bar t}={\mathcal L}|_{C_{\bar t}}$. 
We pick  $p_1,\dots p_N$ smooth points of $C$ such that $h^0(C, L(-D))=h^1(C, L(D))=0$, where $D=\sum p_i$,  and consider the following diagram with exact rows and columns:
\begin{equation}\label{eq: D1}
\begin{tikzcd}
            &                                                & 0 \arrow[d]                                     & 0 \arrow[d]                    &   \\
0 \arrow[r] & L(-D) \arrow[r] \arrow[d, Rightarrow, no head] & L \arrow[r] \arrow[d]                           & L/L(-D) \arrow[r] \arrow[d]    & 0 \\
0 \arrow[r] & L(-D) \arrow[r]                                    & L(D) \arrow[r] \arrow[d]                        & L(D)/L(-D) \arrow[r] \arrow[d] & 0 \\
            &                                                & L(D)/L \arrow[d] \arrow[r,  Rightarrow, no head] & L(D)/L \arrow[d]               &   \\
            &                                                & 0                                               & 0                              &  
\end{tikzcd}
\end{equation}

Taking cohomology in \eqref{eq: D1} one gets another exact diagram:

\begin{equation}\label{eq: D2}
\begin{tikzcd}
            & 0 \arrow[d]                       & 0 \arrow[d]                         & 0 \arrow[d]                               &   \\
0 \arrow[r] & H^0(L) \arrow[r] \arrow[d]        & H^0(L/L(-D)) \arrow[r] \arrow[d]    & H^1(L(-D)) \arrow[d, Rightarrow, no head] &   \\
0 \arrow[r] & H^0(L(D)) \arrow[r] \arrow[d]     & H^0(L(D)/L(-D)) \arrow[r] \arrow[d] & H^1(L(-D)) \arrow[r]                      & 0 \\
            & H^0(L(D)/L) \arrow[r, Rightarrow, no head] & H^0(L(D)/L) \arrow[d]               &                                           &   \\
            &                                   & 0                                   &                                           &  
\end{tikzcd}
\end{equation}
Set $\ol V:=H^0(L(D)/L(-D))$, $\ol W_1:=H^0(L/L(-D))$, $\ol W_2:=H^0(L(D))$. Arguing as in \cite[\S 1]{Mum71}  and in the proof of \cite[Theorem 1.10.(i)]{Har} one shows that $\dim \ol V=2N$ and, using the isomorphism $2L\cong \omega_C$ constructs a non-degenerate bilinear form $\ol Q\colon \ol V\times \ol V\to\K$ such that $\ol W_1$ and $\ol W_2$ are maximal isotropic subspaces. Chasing through diagram \eqref{eq: D2} it is easy to check  that $H^0(L)$ can be identified with $\ol W_1\cap \ol W_2$.

The next step, still following  \cite[\S 1]{Mum71}  and  the proof of \cite[Theorem 1.10.(i)]{Har}, consists in giving a relative version of this construction.
\'Etale locally we may assume that $p_1,\dots p_N$ are cut out on $C$ by disjoint   sections $\sigma_1,\dots \sigma_N$ of $\pi$  contained in the smooth locus of $X$. Write $\mathcal D:=\sigma_1+\dots+\sigma_N$; up to shrinking $\Delta$ we may assume $\pi_*\Lc(-\Dc))=R^1\pi_*(\Lc(\Dc))=0$. 
 Then we have the following exact diagram,  the relative version  of \eqref{eq: D2}:
\begin{equation}\label{eq: D3}
\begin{tikzcd}
            & 0 \arrow[d]                       & 0 \arrow[d]                         & 0 \arrow[d]                               &   \\
0 \arrow[r] & \pi_*\Lc \arrow[r] \arrow[d]        & \pi_*(\Lc/\Lc(-\Dc)) \arrow[r] \arrow[d]    & R^1\pi_*(\Lc(-\Dc)) \arrow[d,  Rightarrow,no head] &   \\
0 \arrow[r] & \pi_*(\Lc(\Dc)) \arrow[r] \arrow[d]     & \pi_*(\Lc(\Dc)/\Lc(-\Dc)) \arrow[r] \arrow[d] & R^1\pi_*(\Lc(-\Dc)) \arrow[r]                      & 0 \\
            & \pi_*(\Lc(\Dc)/\Lc) \arrow[r, Rightarrow, no head] & \pi_*(\Lc(\Dc)/\Lc) \arrow[d]               &                                           &   \\
            &                                   & 0                                   &                                           &  
\end{tikzcd}
\end{equation}

We observe that, possibly up to shrinking $\Delta$ further,  we may assume: 	
\begin{itemize}
\item $R^1\pi_*\Lc(-\Dc)$ is free, since it  has constant rank, hence  the middle row of diagram \eqref{eq: D3} is split
\item   $\pi_*(\Lc(\Dc)/\Lc)$ is also free, since $\Dc\to \Delta$ is a finite flat map, hence  the middle column  of diagram \eqref{eq: D3} is also split
\item the sheaves $\mathcal V:= \pi_*(\Lc(\Dc)/\Lc(-\Dc))$, $\mathcal W_1:=\pi_*(\Lc/\Lc(-\Dc))$ and $\mathcal W_2:=\pi_*(\Lc(\Dc))$ are free
\end{itemize} 
Set $A:=\OO_{\Delta, \bar t}$, let $s\in A$ be a local parameter and let $B_k:=A/s^kA$, where $0<k\in \N$. Tensoring diagram \eqref{eq: D3} with $B_k$ gives:

\begin{equation}\label{eq: D4}
\begin{tikzcd}
            & 0 \arrow[d]                       & 0 \arrow[d]                         & 0 \arrow[d]                               &   \\
0 \arrow[r] &(\mathcal W_1\otimes B_k)\cap (\mathcal W_2\otimes B_k)\arrow[r] \arrow[d]        & \mathcal W_1\otimes B_k \arrow[r] \arrow[d]    & R^1\pi_*(\Lc(-\Dc))\otimes B_k \arrow[d, Rightarrow, no head] &   \\
0 \arrow[r] & \mathcal W_2\otimes B_k\arrow[r]      & \mathcal V \otimes B_k\arrow[r] \arrow[d]& R^1\pi_*(\Lc(-\Dc))\otimes B_k \arrow[r]                      & 0 \\
            &  & \pi_*(\Lc(\Dc)/\Lc) \otimes B_k\arrow[d]               &                                           &   \\
            &                                   & 0                                   &                                           &  
\end{tikzcd}
\end{equation}
 By the previous remarks diagram \eqref{eq: D4} is exact;  in addition all the direct image sheaves appearing in it satisfy cohomology and base change by Corollary \ref{cor: base change}. 
 So, setting $L_k:=\Lc|_{kC}$ and $\Dc_k:= \Dc|_{kC}$ it can be rewritten as:

\begin{equation}\label{eq: D5}
\begin{tikzcd}
            & 0 \arrow[d]                       & 0 \arrow[d]                         & 0 \arrow[d]                               &   \\
0 \arrow[r] &(\mathcal W_1\otimes B_k)\cap (\mathcal W_2\otimes B_k)\arrow[r] \arrow[d]        & H^0(kC,L_k/L_k(-\Dc_k))\arrow[r] \arrow[d]    & H^1(kC, L_k(-\Dc_k)) \arrow[d, Rightarrow, no head] &   \\
0 \arrow[r] & H^0(kC, L_k(\Dc_k)) \arrow[r]      & H^0(kC,L_k(\Dc_k)/L_k(-\Dc_k))\arrow[r] \arrow[d]& H^1(kC, L_k(-\Dc_k))  \arrow[r]                      & 0 \\
            &  & H^0(kC, L_k(\Dc_k)/L_k)\arrow[d]               &                                           &   \\
            &                                   & 0                                   &                                           &  
\end{tikzcd}
\end{equation}
The first row of \eqref{eq: D5} gives an identification $H^0(kC,L_k)\cong (\mathcal W_1\otimes B_k)\cap (\mathcal W_2\otimes B_k)$. 
Denote by $V$, resp.  $W_1, W_2$, the stalks at $\bar t$  of $\mathcal V$, resp. $\mathcal W_1,\mathcal W_2$, so that $\mathcal V\otimes B_k=V\otimes B_k$, resp. $\mathcal W_1\otimes B_k=W_1\otimes B_k$, $\mathcal W_2\otimes B_k=W_2\otimes B_k$. Our claim now follows by Lemma \ref{lem: aux} provided one extends the $\K$-bilinear form $\ol Q\colon \ol V\times \ol V\to \K$ to an $A$-bilinear form $Q\colon V\times V\to A$ such that $W_1$ and $W_2$ are isotropic subspaces. 
This boils down to being able to take residues of rational sections of $\Lc^{\otimes 2}$ along the components of $\Dc$ and it  can be done as in the proof of \cite[Theorem 1.10.(i)]{Har}. The surface $X$ is Gorenstein, since $\Delta$ is smooth and $\pi$ has Gorenstein fibers, and locally near $C$ there is an isomorphism $\Lc^{\otimes 2}\cong \omega_{\pi}$. In turn, $\omega_{\pi}$ restricts to the sheaf relative differentials on the smooth locus of $X$.

\subsection{Proof of Corollary \ref{cor: torsion}}
The statement is local, so we may assume that the torsion subsheaf $\mathcal R$ of $R^1\pi_*\Lc$ is supported at a single point $\bar t\in \Delta$. We write again $A:=\OO_{\Delta,\bar t}$ and $B_k:=A/s^kA$ for $s\in A$ a local parameter. Since $A$ is a DVR, there is a decomposition $\mathcal R=A/s^{r_1}A \oplus \dots \oplus A/s^{r_m}A$, where $1\le r_1\le \dots \le r_m$. For $j\in \N_{>0}$ we let $m_j$ be the number of indices $i$ such that $r_i\ge j$ (e.g., $m_1=m$). The statement is equivalent to showing that all the $m_j$ are even.

For $t\in \Delta$  denote by $C_t $ the fiber of $\pi$ over $t$ and set $C:=C_{\bar t}$, $L_k:=\Lc|_{kC}$. Denote by $q_0$ the rank of  $R^1\pi_*\Lc$; for $t\ne \bar t\in \Delta$, $q_0=h^1(C_t, \Lc|_{C_t})=h^0(C_t, \Lc|_{C_t})$, where the second equality follows by Riemann-Roch on $C_t$. 

By Corollary \ref{cor: base change}, $h^1(kC,L_k)=kq_0+m_1+\dots +m_k$. 
 On the other hand, Riemann-Roch on $kC$ gives $h^1(kC,L_k)=h^0(kC,L_k)$. By the constancy of the parity of theta characteristics in families (\cite[\S 1]{Mum71}  and \cite[Theorem 1.10.(i)]{Har}), $q_0$ and $h^0(C,L)$ have the same parity, so $m_1$ is even.
 Now Theorem \ref{thm: infinitesimal} implies that  $m_1+\dots +m_k$ is even for every $k\ge 2$, hence  all the $m_j$ are even.

\bigskip

\bigskip

\begin{minipage}{13.0cm}

\parbox[t]{5.5cm}{Margarida Mendes Lopes\\ Centro de An\'alise Matem\'atica, Geometria e Sistemas Din\^amicos,\\Departamento de  Matem\'atica\\
Instituto Superior T\'ecnico\\
Universidade de Lisboa\\
Av.~Rovisco Pais, 1\\
1049-001 Lisboa, Portugal\\
mmendeslopes@tecnico.ulisboa.pt
} \hfill
\parbox[t]{5.5cm}{Rita Pardini\\
Dipartimento di Matematica\\
Universit\`a di Pisa\\
Largo B. Pontecorvo, 5\\
56127 Pisa, Italy\\
rita.pardini@unipi.it}
\end{minipage}

\vskip1.0truecm

\parbox[t]{6.5cm}{Roberto Pignatelli\\
Dipartimento di Matematica\\
Universit\`a di Trento\\
via Sommarive 14\\
38123 Trento, Italy\\
roberto.pignatelli@unitn.it
 }

\end{document}